\documentclass[a4paper,reqno,12pt]{amsart}

\usepackage{amsmath, amssymb, amsthm, enumerate,amsfonts,latexsym, mathrsfs}

\usepackage[top=30mm,right=27mm,bottom=30mm,left=27mm]{geometry}


\newtheorem{theorem}{Theorem}[section]

\newtheorem{lemma}[theorem]{Lemma}

\makeatletter
\newcommand{\imod}[1]{\allowbreak\mkern4mu({\operator@font mod}\,\,#1)}
\makeatother


\newcommand{\Irr}{{\mathrm {Irr}}}
\newcommand{\cd}{{\mathrm {cd}}}
\newcommand{\cv}{{\mathrm {cv}}}
\newcommand{\Aut}{{\mathrm {Aut}}}
\newcommand{\Out}{{\mathrm {Out}}}

\newcommand{\PSL}{{\mathrm {PSL}}}
\newcommand{\GL}{{\mathrm {GL}}}

\newcommand{\SL}{{\mathrm {SL}}}

\newcommand{\PGL}{{\mathrm {PGL}}}

\newcommand{\Atlas}{{\sf Atlas}}

\theoremstyle{definition}

\begin{document}
\title[\textbf{Groups with few character values}]{\textbf{Finite groups with few character values}}

\author{Sesuai Y. Madanha}
\address{Department of Mathematics and Applied Mathematics, University of Pretoria, Private Bag X20, Hatfield, Pretoria 0028, South Africa}
\email{sesuai.madanha@up.ac.za}

\subjclass[2010]{Primary 20C15}

\date{\today}

\keywords{character values, character degrees, almost simple groups}

\begin{abstract}
A classical theorem on character degrees states that if a finite group has fewer than four character degrees, then the group is solvable. We prove a corresponding result on character values by showing that if a finite group has fewer than eight character values in its character table, then the group is solvable. This confirms a conjecture of T. Sakurai. We also classify non-solvable groups with exactly eight character values.
\end{abstract}

\maketitle


\section{Introduction}\label{s:intro}
Let $ G $ be a finite group and $ \Irr(G) $ be the set of complex irreducible characters of $ G $. Recall the definition of a  character degree set:

\begin{center}
$ \cd(G):=\{\chi(1)\mid \chi \in \Irr(G) \} $.
\end{center} 

The study of the set $ \cd(G) $ has received much attention from many authors and has been shown to have strong influence on the structure of $ G $. In particular, we have the famous Ito-Michler theorem which states that $ G $ has a normal abelian Sylow $ p $-subgroup if and only if $ p $ does not divide any member of $ \cd(G) $. On the other extreme is Thompson's theorem, which states that if $ p $ divides every member of $ \cd(G)\setminus \{ 1\} $, then $ G $ has a normal $ p $-complement. In this note we shall study a bigger set found in a character table:

\begin{center}
$ \cv(G)=\{\chi(g)\mid \chi \in \Irr(G), g\in G \} $.
\end{center}

Little has been done in the study of this set. The first article in this direction is from T. Sakurai \cite{Sak20}, who studied groups with very few character values. We shall compare results in \cite{Sak20} with corresponding character degree results. It is well known that $ |\cd(G)|=1  $ if and only if $ G $ is abelian. Sakurai showed that if $ |\cv(G)|\leq 3 $, then $ G $ is abelian. The converse is not true since $ |\cv(\mathrm{C}_{r})|=r $, where $ \mathrm{C}_{r} $ denote the cyclic group of order $ r $ for some positive integer $ r $.

If $ |\cd(G)|=2 $ with $ \cd(G)=\{ 1, n\} $, then either $ G $ has an abelian normal subgroup of index $ n $, or $ n=p^{e} $ for a prime $ p $ and $ G $ is direct product of a $ p $-group and abelian group. It turns out that a non-abelian group $ G $ is such that $ |\cv(G)|=4 $ if and only if $ G $ is the generalized dihedral group of a non-trivial elementary abelian $ 3 $-group (\cite[Theorem]{Sak20}). A generalized dihedral group of a non-trivial elementary abelian $ p $-group is a group $ G_{n}= [\mathrm{C}_{p}\times \mathrm{C}_{p}\times \cdots \times \mathrm{C}_{p}]\rtimes \mathrm{C}_{2} $, the semidirect product of the direct product of $ n $ copies of $ \mathrm{C}_{p} $ and $ \mathrm{C}_{2} $, where $ \mathrm{C}_{2} $ acts on $ [\mathrm{C}_{n}\times \mathrm{C}_{n}\times \cdots \times \mathrm{C}_{n}] $ without non-trivial fixed points. For groups with more than four character values, Sakurai remarked on the difficulty to classify these groups. However, a conjecture was proposed which states that if the group has fewer than eight character values, then the group is solvable \cite[Remark]{Sak20}. Our aim is to settle this conjecture:
\begin{theorem}\label{thmA}
If $ |\cv(G)|<8 $, then $ G $ is solvable.
\end{theorem}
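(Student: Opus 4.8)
The plan is to argue by contradiction: suppose $G$ is a counterexample of minimal order, so $G$ is non-solvable with $|\cv(G)| < 8$, i.e.\ $|\cv(G)| \le 7$. The first observation is that $\cv$ behaves well under taking quotients: if $N \normal G$ then $\Irr(G/N)$ embeds into $\Irr(G)$ and the values of characters inflated from $G/N$ are a subset of those of $G$, so $\cv(G/N) \subseteq \cv(G)$ and hence $|\cv(G/N)| \le 7$ as well. By minimality every proper quotient of $G$ is solvable, so $G$ has a unique minimal normal subgroup $N$, and $N$ is non-abelian, say $N \cong S^k$ for a non-abelian simple group $S$. Since $\{1,-1,0\} \subseteq \cv(G)$ whenever $G$ is non-abelian (it has a non-linear character, hence a character vanishing somewhere, and $-1$ appears because... actually one should check this carefully), and $G$ has several non-linear character degrees, the bound $|\cv(G)| \le 7$ is extremely restrictive.

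The core of the argument is then a reduction to almost simple groups. I would show that $C_G(N) = 1$, so $G$ embeds in $\Aut(N) = \Aut(S) \wr S_k$, and then rule out $k \ge 2$: if $N = S^k$ with $k \ge 2$, taking a product $\chi_1 \times \cdots \times \chi_k$ of non-trivial irreducible characters of the factors (extended suitably to $G$, or using Clifford theory) produces too many distinct values, because the values multiply and the degrees $\chi(1)$ already give at least $\{1, d, d^2\}$ with $d = $ smallest non-trivial degree of $S$, forcing $|\cv(G)|$ well above $7$. So $N = S$ is simple and $S \le G \le \Aut(S)$; $G$ is almost simple. Now I invoke the classification of finite simple groups: for each family (alternating, sporadic, the $26$ sporadic groups, groups of Lie type) I need a lower bound on $|\cv(H)|$ for $S \le H \le \Aut(S)$. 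Since $|\cv(S)| \le |\cv(G)|$ is false in general (values can merge in $G$ but $S \le G$ only gives $\cv(S) \subseteq \cv(G)$ via restriction? no — restriction of $\Irr(G)$ to $S$ gives characters of $S$ but not necessarily irreducible). The clean statement is: $\cv(S)$ is \emph{not} obviously contained in $\cv(G)$, so instead I work directly with $G$: it suffices to show that every almost simple group $H$ has $|\cv(H)| \ge 8$.

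For the almost simple case, the strategy is: the group $H$ has at least two non-linear irreducible characters of distinct degrees $1 < d_1 < d_2$ (true for all non-abelian simple groups and their almost simple extensions, since $|\cd|\ge 3$ here as $|\cv| \ge |\cd|$... wait, we want to use known results), contributing $\{1, d_1, d_2\}$; together with $0$ (each non-linear character vanishes somewhere) that is already $4$ values, and one needs $4$ more. Character values at elements of prime order, algebraic conjugates of irrational character values, and the value $-1$ (from some character on some involution or $p$-element) typically push the count past $7$. I expect the main obstacle to be making this uniform and sharp: one must actually produce eight genuinely distinct complex numbers in $\cv(H)$ for \emph{every} almost simple $H$, including small cases like $\PSL_2(q)$ for small $q$, $\A_5$, $\A_6$, $\SL_2(8)$, etc., where the character table is small — these minimal cases (e.g.\ $\A_5$ with $\cv = \{0,1,2,3,-1, \tfrac{1\pm\sqrt5}{2}\}$, which has exactly $7$ elements, a near-miss!) are precisely where the conjecture is tight and where one must argue most carefully, likely by direct inspection using \GAP{} or the \Atlas{} for the sporadic and small Lie-type groups and by explicit generic character-table data (Deligne--Lusztig theory, unipotent characters, Weil characters of $\SL_2$, etc.) for the infinite families. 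The generic argument for large groups of Lie type should follow from the fact that the number of unipotent character degrees, together with values of Gelfand--Graev characters and rationality/irrationality patterns, grows with the rank and $q$.
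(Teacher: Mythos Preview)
Your overall strategy matches the paper's: take a minimal counterexample, use $\cv(G/N)\subseteq\cv(G)$ to force a unique non-abelian minimal normal subgroup, reduce to the almost simple case, and then verify via the classification that every almost simple group $H$ satisfies $|\cv(H)|\ge 8$. The paper's reduction step is in fact terser than yours --- it passes directly from ``unique non-abelian minimal normal subgroup'' to ``$G$ almost simple'' without discussing the possibility $N\cong S^k$ with $k\ge 2$ --- so your attention to that point is reasonable, even if your argument for it is only a sketch.

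There is, however, a concrete error that undercuts your proposal. Your computation of $\cv(\A_5)$ is wrong: the degrees are $1,3,3,4,5$, and the full set of values is
\[
\cv(\A_5)=\Bigl\{1,\ 3,\ 4,\ 5,\ 0,\ -1,\ \tfrac{1+\sqrt5}{2},\ \tfrac{1-\sqrt5}{2}\Bigr\},
\]
eight elements, not seven (there is no $2$ in the table; you have dropped the degrees $4$ and $5$). This is not a harmless slip: if your count of $7$ were correct, $\A_5$ would be an outright counterexample to the theorem rather than a ``near-miss'', and there would be nothing to prove. In fact $\A_5\cong\PSL_2(5)$ and $\PGL_2(5)$ are precisely the non-solvable groups with $|\cv|=8$, sitting exactly on the boundary.

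For the almost simple verification itself, the paper's organization is more efficient than your family-by-family plan. Since $\cd(G)\subseteq\cv(G)$, and a non-abelian group always contributes $0$ together with (by column orthogonality) a value outside $\cd(G)\cup\{0\}$, the case $|\cd(G)|\ge 7$ is immediate. The remaining cases $|\cd(G)|\in\{4,5,6\}$ are handled by invoking existing classifications of almost simple groups with few character degrees (Malle--Moret\'o for four, a short list extracted from \cite{GGLTV17} for five, He--Zhu for five and six), which reduces everything to a handful of explicit families --- $\PSL_2(q)$, $\PGL_2(q)$, $M_{10}$, $\PSL_3(4)$, $^{2}\mathrm{B}_2(q)$ --- whose character tables are known. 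Your proposed direct attack through Deligne--Lusztig theory and Lie-type families could work in principle but is considerably more labor.
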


Isaacs \cite[Theorem 12]{Isa06} proved a result that is now considered classical which states that if  $ |\cd(G)|<4 $, then $ G $ is solvable. Hence Theorem \ref{thmA} is the corresponding result for character values. Our proof uses the classification of finite simple groups.

Malle and Moret\'o \cite{MM05} classified non-solvable groups with $ |\cd(G)|=4 $ (see Theorem \ref{MM05TheoremA}). We obtain a corresponding result for character values:
\begin{theorem}\label{thmB}
Let $ G $ be a finite non-solvable group. Then $ |\cv(G)|=8 $ if and only if $ G\cong \PSL_{2}(5) $ or $ \PGL_{2}(5) $.
\end{theorem}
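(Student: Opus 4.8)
\emph{Proof strategy.} The ``if'' direction is a direct inspection of two character tables. For $\PSL_2(5)\cong A_5$ one reads off
\[
\cv(\PSL_2(5))=\Bigl\{\,0,\ 1,\ -1,\ 3,\ 4,\ 5,\ \tfrac{1+\sqrt5}{2},\ \tfrac{1-\sqrt5}{2}\,\Bigr\},
\]
and for $\PGL_2(5)\cong S_5$ one reads off $\cv(\PGL_2(5))=\{\,-2,-1,0,1,2,4,5,6\,\}$; each set has exactly eight elements, so both groups occur. For the converse, let $G$ be non-solvable with $|\cv(G)|=8$; we aim to show $G\cong A_5$ or $S_5$ by induction on $|G|$. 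As a first step, $\{\chi(1):\chi\in\Irr(G)\}\subseteq\cv(G)$ gives $|\cd(G)|\le8$, while $|\cd(G)|\ge4$ by Isaacs' theorem; moreover $0\in\cv(G)$ since $G$ is non-abelian, and for any non-trivial $\chi\in\Irr(G)$ the relation $\sum_{g\in G}\chi(g)=0$ forces a character value of negative real part. As $\cd(G)$ consists of positive integers, $|\cv(G)|\ge|\cd(G)|+2$, so in fact $|\cd(G)|\in\{4,5,6\}$.

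Next comes the structural reduction. If $1\ne N\trianglelefteq G$ then $\cv(G/N)\subseteq\cv(G)$, so $|\cv(G/N)|\le8$; when $G/N$ is non-solvable this is $=8$ by Theorem~\ref{thmA}, hence $G/N\cong A_5$ or $S_5$ by induction. So it suffices to analyse the minimal normal subgroups of $G$: either some minimal normal subgroup is the (elementary abelian) solvable radical, or $F^*(G)=S_1\times\cdots\times S_k$ is a product of non-abelian simple groups with $C_G(F^*(G))=1$ and $G\hookrightarrow\Aut(S_1\times\cdots\times S_k)$. In the latter situation, Clifford theory with respect to $F^*(G)$ shows that $k\ge2$ already forces $|\cv(G)|>8$, so $k=1$ and $G$ is almost simple with socle $S$. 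For $|\cd(G)|=4$ one runs $G$ through the Malle--Moret\'o list (Theorem~\ref{MM05TheoremA}) and computes $\cv$ in each case — for instance $|\cv(\PSL_2(2^f))|>8$ when $f\ge3$, because the characters of degrees $2^f\pm1$ contribute several distinct values $-(\zeta+\zeta^{-1})$ with $\zeta$ a root of unity of order dividing $2^f\pm1$, while $\cv(S\times C_n)\supsetneq\cv(S)$ whenever $n>1$ — leaving only $A_5=\PSL_2(4)$ and $S_5=\PGL_2(5)$. For $|\cd(G)|\in\{5,6\}$ one instead argues directly, using the classification of finite simple groups, that every almost simple group with five or six character degrees has at least nine distinct character values.

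It remains to exclude a non-trivial \emph{solvable} minimal normal subgroup $N$ lying above $A_5$ or $S_5$, i.e.\ an extension $1\to N\to G\to A_5\to1$ (or with $S_5$) with $N$ an irreducible $\F_p[G/N]$-module. One studies the characters of $G$ that do not contain $N$ in their kernel, through both their degrees (a non-trivial $G$-orbit length on $\widehat N$ times a positive integer) and their restrictions to $N$ (on each element of $N$ a sum of $G$-conjugate values, hence a sum of $p$-th roots of unity), and shows that some such character contributes a value outside $\cv(A_5)$ (resp.\ $\cv(S_5)$). This contradicts $|\cv(G)|=8$, forces $N=1$, and finishes the induction.

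The main obstacle is the invocation of the classification of finite simple groups. Whereas the degree bound $|\cd(G)|\le6$ is elementary, controlling the \emph{number of distinct values} in the character table of each relevant simple group $S$ — and of its covers and automorphic extensions — requires knowing the fields of values $\mathbb{Q}(\chi)$ for $\chi\in\Irr(S)$ and the Galois action on $\Irr(S)$; it is precisely the families of Galois-conjugate irrational values (in the $\PSL_2(q)$, $\PSL_3(q)$, $\PSU_3(q)$, Suzuki and Ree series, and in the small alternating and sporadic groups) that push $|\cv(S)|$ above $8$. Obtaining this uniformly across the infinite Lie-type families, rather than case by case for bounded $q$, is the delicate point.
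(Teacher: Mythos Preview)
Your strategy is coherent, but it takes a longer and rockier road than the paper's, and the final stretch is not actually carried out.

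The paper never performs your inductive reduction through $F^*(G)$ or a minimal normal subgroup. Instead, having established $|\cd(G)|\in\{4,5,6\}$ just as you do, it immediately invokes two off-the-shelf structure theorems: for $|\cd(G)|\in\{5,6\}$, the He--Zhu theorem (Theorem~\ref{cd5,6}) gives that $G/L$ is almost simple with $L$ the solvable radical, and then Theorem~\ref{almosthasgreaterthan8} yields $|\cv(G)|\ge|\cv(G/L)|\ge9$ outright. For $|\cd(G)|=4$, the Malle--Moret\'o theorem (Theorem~\ref{MM05TheoremA}) applies to \emph{all} non-solvable groups, not just almost simple ones, so $G$ itself already sits in one of three explicit families; combining with Theorem~\ref{almosthasgreaterthan8} leaves only $G\cong\PSL_2(5)\times A$ with $A$ abelian, or $G/C\cong\PGL_2(5)$ with $C\le\Center(G)$. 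The remaining work is then two short, concrete computations: for $\PSL_2(5)\times A$ with $A\ne1$ the value $5\phi(g)$ (with $\phi(g)$ a negative value of some $\phi\in\Irr(A)$) lies outside $\cv(\PSL_2(5))$; and for $C\ne1$ the Isaacs--Knutson bound $|\cd(G\mid \Center(G)U)|\ge3$ produces a faithful $\chi$ of degree $m\ge3$, whence $\chi(z)=-m\notin\cv(\PGL_2(5))$ for $1\ne z\in\Center(G)$.

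By contrast, your route forces you to deal with an arbitrary extension $1\to N\to G\to A_5\to1$ (or with $S_5$) by an irreducible $\F_p$-module $N$, and your proposed argument---that some $\chi\in\Irr(G\mid N)$ must take a value outside $\cv(A_5)$ or $\cv(S_5)$---is only asserted, not proved. This is precisely the step the paper's approach sidesteps: because Malle--Moret\'o pins down the solvable kernel as either a direct factor or a central subgroup, one never has to analyse general module actions or orbit lengths on $\widehat N$. Likewise, your treatment of the almost simple case with $|\cd(G)|\in\{5,6\}$ is just the phrase ``one argues directly using CFSG''; the paper does exactly this work in Theorem~\ref{almosthasgreaterthan8}, via the explicit classification of almost simple groups with five degrees (Theorem~\ref{cd5}) and Clifford-theoretic arguments for the index-two and cyclic-outer cases. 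So your outline is not wrong, but the two places where real content is needed---the almost simple bound and the elimination of a solvable socle---are left as claims, and the paper's use of He--Zhu and the full strength of Malle--Moret\'o makes both of them essentially disappear.
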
 

\section{Preliminary Results}
In this section we list some preliminary results that we need to prove our main results. We shall use freely these well known results in the following lemma. 
\begin{lemma}\label{lem}
Let $ G $ be a finite group, $ N $ be a normal subgroup of $ G $ and $ n $ be a positive integer. Then the following hold:
\begin{itemize}
\item[(a)] $ |\cv(G/N)|\leq |\cv(G)| $,
\item[(b)] The character table of $ \mathrm{C}_{n} $ has a column with pairwise different values. In particular, $ |\cv(\mathrm{C}_{n})|=n $,
\item[(c)] If $ G $ is non-abelian, then $ 0\in \cv(G) $.
\end{itemize}
\end{lemma}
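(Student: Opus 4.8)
All three parts are standard facts of character theory, and I would prove them in the order (a), (b), (c). For (a) the plan is to use inflation: each $\chi\in\Irr(G/N)$ gives a character $\widehat\chi\in\Irr(G)$ with $\widehat\chi(g)=\chi(gN)$ for every $g\in G$, so every value attained by an irreducible character of $G/N$ is also attained by one of $G$; hence $\cv(G/N)\subseteq\cv(G)$ and $|\cv(G/N)|\le|\cv(G)|$.

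For (b) I would write $\mathrm{C}_n=\la x\ra$, fix a primitive $n$-th root of unity $\zeta$, and use the explicit list $\Irr(\mathrm{C}_n)=\{\chi_0,\dots,\chi_{n-1}\}$ with $\chi_j(x^k)=\zeta^{jk}$. Every character value is then an $n$-th root of unity, so $\cv(\mathrm{C}_n)$ lies in the set of $n$-th roots of unity and $|\cv(\mathrm{C}_n)|\le n$; meanwhile the column at $x$ has the $n$ entries $\chi_j(x)=\zeta^j$ ($0\le j\le n-1$), which are pairwise distinct, so that column already witnesses both $|\cv(\mathrm{C}_n)|\ge n$ and the ``pairwise different values'' assertion. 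Together these give $|\cv(\mathrm{C}_n)|=n$.

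For (c) the content is Burnside's theorem that a nonlinear irreducible character vanishes somewhere: since $G$ is non-abelian it has some $\chi\in\Irr(G)$ with $\chi(1)>1$, and then $\chi(g)=0$ for some $g\in G$, so $0\in\cv(G)$. For completeness I would recall the classical argument: if $\chi(g)\ne0$ for all $g$, then AM--GM applied to the identity $\sum_{g\ne1}|\chi(g)|^2=|G|-\chi(1)^2$ forces $\prod_{g\ne1}|\chi(g)|^2<1$ (here $\chi(1)>1$ is used), whereas this product equals $\big(\prod_{g\ne1}\chi(g)\big)^2$ and is fixed by $\mathrm{Gal}(\mathbb{Q}(\zeta_{|G|})/\mathbb{Q})$ --- because $\zeta_{|G|}\mapsto\zeta_{|G|}^{t}$ sends $\chi(g)$ to $\chi(g^{t})$ and $g\mapsto g^{t}$ permutes $G\setminus\{1\}$ --- hence is a nonzero rational integer and so is $\ge1$, a contradiction.

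Since (a) and (b) are entirely elementary and the only non-trivial ingredient, Burnside's vanishing theorem used in (c), is classical, I do not expect any real obstacle in establishing this lemma.
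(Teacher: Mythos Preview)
Your proof is correct. The paper itself does not prove this lemma at all: it introduces the three assertions as ``well known results'' to be used freely and gives no argument, so there is no approach to compare against. Your inflation argument for (a), the explicit column computation for (b), and the appeal to Burnside's vanishing theorem (together with the classical AM--GM/Galois justification) for (c) are precisely the standard proofs one would supply, and in that sense you have gone further than the paper.
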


The following is a result of Malle and Moret\'o \cite{MM05} in which they classified non-solvable groups with four character degrees. 
\begin{theorem}\cite[Theorem A]{MM05}\label{MM05TheoremA}
Let $ G $ be a non-solvable group with $ |\cd(G)|=4 $. Then one of the following holds:
\begin{itemize}
\item[(a)] $ G\cong \PSL_{2}(2^{f})\times A $ for some $ f\geq 2 $ and some abelian subgroup $ A $;
\item[(b)] $ G $ has a normal subgroup $ U $ such that $ U\cong \PSL_{2}(q) $ or $ \SL_{2}(q) $ for some odd $ q\geq 5 $, and if $ C=\textbf{C}_{G}(U) $, then $ C\leq \textbf{Z}(G) $ and $ G/C\cong \PGL_{2}(q) $; or
\item[(c)] the group $ G $ has a normal subgroup of index $ 2 $ that is a direct product of $ \PSL_{2}(9) $ and central subgroup $ C $. Furthermore, $ G/C\cong M_{10} $, the Mathieu group which the stabilizer of a point in the Mathieu group, $ M_{11} $, in its natural permutation representation.
\end{itemize}
Conversely, if one of (a)-(c) holds, then $ |\cd(G)|=4 $.
\end{theorem}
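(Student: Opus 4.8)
The plan is to derive this from the classification of finite simple groups, in three stages: (1) classify the almost simple groups --- in particular the non-abelian simple groups --- with at most four character degrees; (2) reduce an arbitrary non-solvable $G$ with $|\cd(G)|=4$ to a configuration built around one such group; and (3) pin down the solvable radical and the precise extension, and then verify the converse.

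Stage (1) is where I expect the real work, and it is the only place the full classification is used. Using the known character degrees of the finite simple groups --- generic degrees from Deligne--Lusztig theory for the groups of Lie type, the explicit ordinary character tables of $\PSL_2(q)$, $\SL_2(q)$ and $\PGL_2(q)$, and direct inspection of the alternating and sporadic groups --- one shows that every non-abelian simple group has at least four character degrees (this is Isaacs' theorem in the simple case), with $|\cd(S)|=4$ exactly for $S\cong\PSL_2(2^f)$, $f\geq 2$, so that $\cd(S)=\{1,\,2^f-1,\,2^f,\,2^f+1\}$; and that the almost simple groups with at most four degrees are precisely $\PSL_2(2^f)$ ($f\geq 2$), $\PGL_2(q)$ ($q\geq 5$ odd), and $M_{10}$. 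The collapsing small cases have to be handled by hand: $\PSL_2(q)$ has five degrees for odd $q\geq 7$ but only four for $q=5$ (where $\PSL_2(5)\cong\A_5\cong\PSL_2(4)$ already falls under $\PSL_2(2^f)$); among the three index-two overgroups of $\A_6\cong\PSL_2(9)$ only $\PGL_2(9)$ and $M_{10}$ keep four degrees, $\mathrm{S}_6$ gaining a fifth; and every field-automorphism extension of a $\PSL_2(2^f)$ gains a degree, the only apparent exception being the extension of $\PSL_2(4)$, which equals $\PGL_2(5)$.

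For stage (2), let $R$ be the solvable radical of $G$. Since $\cd(G/R)\subseteq\cd(G)$, the quotient $G/R$ is non-solvable with $|\cd(G/R)|\leq 4$ and trivial solvable radical, so $\mathrm{soc}(G/R)=S_1\times\cdots\times S_k$ is a direct product of non-abelian simple groups and $G/R$ embeds in $\Aut(\mathrm{soc}(G/R))$ permuting the factors. As every member of $\cd(\mathrm{soc}(G/R))$ divides some degree of $G/R$, while the degree set of a direct product of two or more non-abelian simple groups is too large to be accommodated by only four degrees, we get $k=1$; thus $G/R$ is almost simple and appears in the list of stage (1). Next one treats $R$: an element of $R$ outside $\Center(G)$ would, via Clifford theory over a suitable normal subgroup, produce an irreducible character of $G$ whose degree lies outside the four degrees of the almost simple quotient, so $R\leq\Center(G)$. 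Hence $G$ is a central extension, its perfect residual $L=G^{(\infty)}$ is quasisimple with $L/\Center(L)$ isomorphic to the socle of $G/R$, and --- since the relevant Schur multipliers are tiny (covers such as $3.\A_6$ have far more than four degrees) --- $L$ is $\PSL_2(2^f)$, $\PSL_2(q)$ or $\SL_2(q)$. Writing $C=\Centralizer_G(L)\leq\Center(G)$ and analysing the embedding $G/C\hookrightarrow\Aut(L)$ produces exactly the configurations (a), (b), (c): for socle $\PSL_2(2^f)$ the smallness of the multiplier forces $G\cong\PSL_2(2^f)\times R$ with $R=A$ abelian, while for socles $\PSL_2(q)$, $\SL_2(q)$ the quotient $G/C$ is $\PGL_2(q)$ or $M_{10}$.

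Finally the converse is a case-by-case check. In (a), $\cd(\PSL_2(2^f)\times A)=\cd(\PSL_2(2^f))=\{1,\,2^f-1,\,2^f,\,2^f+1\}$ since $A$ is abelian. In (b), $G/C\cong\PGL_2(q)$ already gives $\{1,q-1,q,q+1\}\subseteq\cd(G)$, and a Clifford-theoretic computation with the character table of $\PSL_2(q)$ (resp.\ $\SL_2(q)$) shows that the characters of ``half degree'' $(q\pm 1)/2$ are swapped in pairs by $G$, so they contribute to $\cd(G)$ only the degrees $q-1$ and $q+1$, whence $\cd(G)=\{1,q-1,q,q+1\}$. In (c), one reads $\cd(M_{10})=\{1,9,10,16\}$ off its character table and checks, the two degree-$5$ and two degree-$8$ characters of the normal $\PSL_2(9)$ being fused by the $M_{10}$-type outer involution, that the central $C$ adds nothing new.
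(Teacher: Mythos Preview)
The paper does not prove this theorem at all: it is quoted verbatim as \cite[Theorem~A]{MM05} and used as a black box in the proofs of Theorem~\ref{almosthasgreaterthan8} and Theorem~\ref{thmB}. There is therefore no ``paper's own proof'' to compare your attempt against.

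That said, your three-stage outline is a plausible reconstruction of the Malle--Moret\'o argument, and the overall architecture (classify almost simple groups with $|\cd|\le 4$; reduce to an almost simple quotient; show the solvable radical is central and analyse the resulting quasisimple layer) is the natural one. A couple of points in your sketch would need more care if you were actually writing this up. First, the assertion that a non-central element of the solvable radical $R$ forces a fifth degree is the heart of the reduction and is not at all automatic; in \cite{MM05} this step uses detailed information about the degrees of $\PSL_2(q)$ and genuine work with Clifford theory, not just a one-line appeal. Second, your handling of case~(a) glosses over the exceptional Schur multiplier of $\PSL_2(4)\cong\A_5$: you need to explain why a non-split central extension involving $\SL_2(5)$ cannot occur under the hypothesis $|\cd(G)|=4$ (it cannot, since $\cd(\SL_2(5))$ already has six elements, but this should be said). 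Finally, in case~(b) you should make explicit why the quotient $G/C$ must be all of $\PGL_2(q)$ rather than just $\PSL_2(q)$, since for odd $q\ge 7$ the group $\PSL_2(q)$ itself has five degrees.
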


He and Zhu \cite{HZ12} described non-solvable groups with five and six character degrees:

\begin{theorem}\cite[Corollary C]{HZ12}\label{cd5,6}
Let $ G $ be a non-solvable group and $ L $ be the solvable radical of $ G $. If $ 5\leq |\cd(G)|\leq 6 $, then $ G'/L $ is isomorphic to  $ \PSL_{2}(p^{f}) $ with $ p^{f}\geq 4 $, $ \PSL_{3}(4) $ or $ ^{2}\mathrm{B}_{2}(2^{f}) $, where $ 2^{f}=2^{2m + 1} $ for some integer $ m\geq 1 $. Moreover, $ G/L $ is an almost simple group.
\end{theorem}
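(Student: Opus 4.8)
\medskip
\emph{Proof proposal.} The plan is to cut the problem down to the almost simple case and then to read the answer off the known character degree data of the finite simple groups and their automorphic extensions.

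First I would replace $G$ by $\bar{G}:=G/L$. Inflation embeds $\Irr(G/N)$ into $\Irr(G)$ for every $N\trianglelefteq G$, so $\cd(G/N)\subseteq\cd(G)$; in particular $|\cd(\bar{G})|\leq|\cd(G)|\leq 6$. On the other hand $\bar{G}$ is again non-solvable, since a non-trivial solvable normal subgroup of $G/L$ would pull back to a solvable normal subgroup of $G$ properly containing $L$; hence Isaacs' theorem \cite[Theorem~12]{Isa06} gives $|\cd(\bar{G})|\geq 4$. Moreover $\bar{G}$ has trivial solvable radical, so its socle is $M=S_{1}\times\cdots\times S_{k}$ with the $S_{i}$ non-abelian simple, $\Centralizer_{\bar{G}}(M)=1$, and hence $M\leq\bar{G}\leq\Aut(M)$.

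Next I would show that $M$ is a single non-abelian simple group, so that $\bar{G}$ is almost simple. Here one invokes the standard mechanism by which a large non-abelian socle forces many character degrees (cf. the proof of \cite[Theorem~A]{MM05}): if $M$ had two simple direct factors $S$ and $T$, then --- since every non-abelian simple group has at least four character degrees --- one may pick $1<a_{1}<a_{2}<a_{3}$ in $\cd(S)$ and $1<b_{1}<b_{2}<b_{3}$ in $\cd(T)$ and, pushing up to $\bar{G}$ suitable $\Aut(M)$-stable characters of the homogeneous components of $M$ (built from extendible characters, such as the Steinberg character of a Lie type factor or a Galois-stable extendible character of an alternating or sporadic factor) via Clifford theory and Gallagher's theorem, obtain at least seven distinct members of $\cd(\bar{G})$, a contradiction. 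Hence $M=S$ is simple, $\Centralizer_{\bar{G}}(S)=1$, and $S\leq\bar{G}\leq\Aut(S)$.

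Finally I would determine which $S$ survive. For this one runs through the finite simple groups and, for each, through the almost simple overgroups $X$ with $S\leq X\leq\Aut(S)$, using the generic parametrizations and the known character tables to compute $|\cd(X)|$; the outcome is that $|\cd(X)|\leq 6$ forces $S\cong\PSL_{2}(p^{f})$ with $p^{f}\geq 4$, $S\cong\PSL_{3}(4)$, or $S\cong{}^{2}\mathrm{B}_{2}(2^{f})$ with $2^{f}=2^{2m+1}$ and $m\geq 1$ (the degenerate cases $\PSL_{2}(2)$, $\PSL_{2}(3)$ and ${}^{2}\mathrm{B}_{2}(2)$ being discarded as solvable). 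Tracking the derived subgroup through the resulting short list of pairs $(S,X)$ gives $(\bar{G})'=G'L/L\cong S$, while $\bar{G}=G/L$ is almost simple, as claimed. This last step --- the case-by-case analysis, using the classification of finite simple groups, of which almost simple groups have at most six character degrees --- is where essentially all the work lies and is the main obstacle; it is precisely what is carried out in \cite{MM05} for four degrees and in \cite{HZ12} for five and six.
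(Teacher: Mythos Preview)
The paper does not prove this statement at all: it is quoted verbatim as \cite[Corollary~C]{HZ12} and used as a black box, so there is no in-paper argument to compare against. Your outline is the standard reduction strategy behind such results (pass to $G/L$, show the socle is a single simple factor via a degree-counting contradiction, then classify the surviving almost simple groups from known character degree data), and you correctly flag that the substantive work is the case analysis carried out in \cite{HZ12}. One small point worth tightening: the statement's ``$G'/L$'' should be read as $G'L/L=(G/L)'$, and the assertion that this equals the socle $S$ is not automatic for an arbitrary almost simple group --- it relies on checking, for each surviving pair $(S,X)$, that $X'=S$ (equivalently that $X/S$ is abelian), which does hold here but deserves an explicit remark rather than being folded into ``tracking the derived subgroup''.
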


We classify almost simple groups with five character degrees. We first have some definitions needed in our result : $ q=p^{f}\geqslant 4 $, $ \Out(\PSL_{2}(q))=\langle \delta \rangle \times \langle \varphi \rangle , |\langle \varphi \rangle|=f, |\langle \delta \rangle|=\gcd(2, q-1) $.
\begin{theorem}\label{cd5}
Let $G$ be a finite almost simple group, that is $ S \triangleleft G \leq Aut(S) $ for some non-abelian simple group $S$. Then $ |\cd(G)|=5 $ if and only if one of the following holds:
\begin{itemize}
\item[(a)]  $ G\cong \PSL_{2}(q) $, $ q $ odd, $ \cd(G)=\{1, (q+\epsilon)/2, q \pm 1, q\} $,
\item[(b)] $ G\cong\PSL_{2}(2^{f})\langle \varphi \rangle $, $ f > 2 $ prime, $ \cd(G)=\{1, 2^{f}-1, 2^{f}, (2^{f}\pm 1)f\} $,
\item[(c)] $ G\cong \PSL_{2}(2^{f})\langle \varphi^{f/2} \rangle $, $ f > 2 $ even, $ \cd(G)= \{1, 2^{f}, 2^{f} + 1, 2(2^{f}\pm 1)\} $,
\item[(d)] $ G\cong \PGL_{2}(q)\langle \varphi^{f/2} \rangle $, $ f > 2 $ even, $ q $ odd, $ \cd(G)=\{1, q, q + 1, 2(q\pm 1) \} $,
\item[(e)] $ G\cong \PGL_{2}(3^{f})\langle \varphi \rangle $, $ f > 2 $ even, $ \cd(G)=\{1, 3^{f}, 3^{f} - 1, (3^{f}\pm 1)f\} $,
\item[(f)] $ G\cong \PSL_{2}(9)\langle \varphi \rangle \cong \mathrm{S}_{6} $, $ f = 2 $, $ \cd(G)=\{ 1, 5, 9, 10, 16\} $,
\item[(g)] $ G\cong \PSL_{2}(5^{f})\langle \varphi \rangle $, $ f > 2 $ prime, $ \cd(G)=\{1, 5^{f}, (5^{f} + 1)/2, 5^{f} - 1, (5^{f}\pm 1)f \} $,
\item[(h)] $ G\cong \PSL_{2}(q)\langle \delta \varphi ^{f/2}\rangle $, $ f > 2 $ even, $ q\neq 9 $, $ p $ odd, $ \cd(G)=\{1, q, q + 1, 2(q\pm 1) \} $.
\end{itemize}
\end{theorem}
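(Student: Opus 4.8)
The plan is to first cut the possibilities for the socle $S$ of $G$ down to a short list via Theorem~\ref{cd5,6}, then dispose of two of those by direct inspection, and finally treat the main family $S\cong\PSL_2(q)$ by a Clifford-theoretic count over the groups between $\PSL_2(q)$ and $\Aut(\PSL_2(q))$. \emph{Reduction to the socle.} An almost simple group $G$ with socle $S$ has trivial solvable radical: since $\Centralizer_G(S)=1$, any nontrivial normal subgroup of $G$ meets $S$ nontrivially and hence contains $S$, which is non-solvable. So $L=1$ in Theorem~\ref{cd5,6}; as $|\cd(G)|=5$ and $S\le G'$, that theorem forces $G'$ to be simple with $S=G'\cong\PSL_2(p^f)$ for some $p^f\ge 4$, or $\PSL_3(4)$, or ${}^{2}\mathrm{B}_2(2^f)$ with $2^f=2^{2m+1}$, $m\ge 1$. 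Along the way I would record the classical degree data: $|\cd(\PSL_2(q))|=5$ for $q$ odd (the degrees being $1$, $(q+\epsilon)/2$, $q-1$, $q$, $q+1$ once $q$ is large enough), $|\cd(\PSL_2(q))|=4$ for $q$ even, and $|\cd(\PSL_3(4))|=|\cd({}^{2}\mathrm{B}_2(2^f))|=6$.

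\emph{Eliminating $\PSL_3(4)$ and the Suzuki groups.} If $S=\PSL_3(4)$ then $\Out(S)$ is solvable of order $12$, so there are only finitely many $G$ with $S\le G\le\Aut(S)$; one checks from the \Atlas\ (equivalently, from the known action of the diagonal, graph and field automorphisms on $\Irr(\PSL_3(4))$) that none of them has exactly five character degrees. If $S={}^{2}\mathrm{B}_2(2^f)$ then $\Out(S)=\langle\varphi\rangle$ is cyclic of order $f$ acting by field automorphisms; using Suzuki's generic character table of $S$ and counting the $\langle\varphi^k\rangle$-orbits on the families of cuspidal characters, one verifies $|\cd(G)|\ge 6$ for every such $G$. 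Both are finite verifications, so afterwards we may assume $S\cong\PSL_2(q)$ with $q=p^f\ge 4$.

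\emph{The case $S\cong\PSL_2(q)$.} Here $\Out(S)=\langle\delta\rangle\times\langle\varphi\rangle$ with $|\langle\delta\rangle|=\gcd(2,q-1)$ and $|\langle\varphi\rangle|=f$, and (for $q\ne 9$) $\Aut(S)=\PGL_2(q)\rtimes\langle\varphi\rangle$. I would recall the full character theory of $\PSL_2(q)$ and $\PGL_2(q)$ together with the exact action of $\delta$ and $\varphi$ on irreducible characters: the principal- and discrete-series characters are parametrised (up to inversion) by characters of the tori $\mathrm{C}_{q-1}$ and $\mathrm{C}_{q+1}$, the field automorphism $\varphi$ acts on these parameters as the Frobenius $x\mapsto x^{p}$, and $\delta$ interchanges the two $\PSL_2(q)$-constituents of each $\PGL_2(q)$-character of degree $q\pm1$ while fixing $1_G$ and the Steinberg character. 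Then I would run over the subgroups $S\le G\le\Aut(S)$ — equivalently the subgroups of $\Out(S)$ of shape $\langle\varphi^k\rangle$, $\langle\delta\rangle\times\langle\varphi^k\rangle$, or $\langle\delta\varphi^k\rangle$ — and compute $\cd(G)$ by Clifford theory: a character of the relevant base group ($\PSL_2(q)$ or $\PGL_2(q)$) fixed by the adjoined automorphisms extends to $G$ (the quotient being cyclic, cf.\ \cite{Isa06}), while a $\langle\varphi^k\rangle$-orbit of length $d$ induces to an irreducible character of $G$ of degree $d$ times the common degree in the orbit. Counting the distinct values produced should single out exactly the families (a)--(h): (a) is the bare $\PSL_2(q)$ with $q$ odd; (b) and (g) come from adjoining all of $\langle\varphi\rangle$ when $f$ is prime (to $\PSL_2(2^f)$ and to $\PSL_2(5^f)$); (c), (d), (e), (h) from adjoining the order-$2$ field automorphism $\varphi^{f/2}$ (to $\PSL_2(2^f)$, to $\PGL_2(q)$, to $\PGL_2(3^f)$, and to $\PSL_2(q)$ via $\delta\varphi^{f/2}$); and (f) is the exceptional isomorphism $\PSL_2(9)\langle\varphi\rangle\cong\mathrm{S}_6$. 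The converse — that each listed $G$ really has five character degrees — falls out of the same computation.

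\emph{Main obstacle.} The whole difficulty is in the $\PSL_2(q)$ step. One must determine, for each power $\varphi^k$, exactly how many parameters in each torus family are $\varphi^k$-fixed — which reduces to $\gcd$-type conditions relating $k$, $f$, the orders $q\pm1$, and the order-$2$ parameter responsible for the exceptional degree $(q+\epsilon)/2$ — and then decide which combinations of the resulting degrees ($1$, $q$, $q\pm1$, $d(q\pm1)$, $(q\pm\epsilon)/2$, $fq$, $\ldots$) collapse to precisely five distinct values. It is exactly these (near-)coincidences that force the hypotheses ``$f$ prime'', ``$f$ even'', ``$q=3^f$'', ``$q=5^f$'', ``$q\ne 9$'' in (b)--(h) and rule out every other intermediate group, while the small cases $q\in\{4,5,9\}$ need separate treatment because of further degeneracies. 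Carrying out this bookkeeping over all divisors of $f$ and all admissible automorphism subgroups, with nothing omitted, is the bulk of the proof.
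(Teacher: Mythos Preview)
Your strategy is sound and would work, but it is a genuinely different route from the one the paper takes. The paper does not go through Theorem~\ref{cd5,6} at all for this result; instead it observes (via Thompson's theorem and the existence of a nontrivial $\chi\in\Irr(S)$ extending to $\Aut(S)$) that no prime can divide every nontrivial degree of $G$, hence with only four nontrivial degrees no prime divides four of them, and then invokes the classification of nonsolvable groups with that property due to Ghaffarzadeh--Ghasemi--Lewis--Tong-Viet \cite{GGLTV17}. The list (a)--(h) is then read off directly from their Table~1, with the remaining cases of \cite[Theorem~A]{GGLTV17} discarded by \Atlas{} and Suzuki character tables.

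What this buys: the paper's proof is essentially two lines once \cite{GGLTV17} is quoted, whereas your proposal re-derives the relevant slice of that classification by hand. Your approach is more self-contained---it needs only He--Zhu and the well-known character theory of $\PSL_2(q)$ and ${}^2\mathrm{B}_2(q)$---but the ``main obstacle'' you flag is real: the orbit-counting over all subgroups of $\langle\delta\rangle\times\langle\varphi\rangle$, tracking which of $1$, $q$, $q\pm1$, $(q\pm1)d$, $(q+\epsilon)/2$ coincide or disappear, is exactly the lengthy case analysis carried out in \cite{GGLTV17}. One small correction: your Suzuki step is not literally a ``finite verification'' since there is one group per odd $f$ and per divisor of $f$; it is a uniform argument from the generic table, as you in fact indicate.
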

\begin{proof} 
First, it is well known that every simple group $ S $ has a non-trivial irreducible character $\chi $ that can be extended to $ \Aut(S) $. Suppose that $ r $ is a prime that divides the character degree of $ \chi $. If $ r $ divides every character degree of $ G $, then by Thompson's theorem \cite[Corollary 12.2]{Isa06}, $ G $ has a normal $ r $-complement, a contradiction. 

We may thus assume that no prime divides every character degree of $ G $. Since $ G $ has five character degrees, there is no prime dividing four character degrees of $ G $. These groups are the groups in \cite[Theorem A]{GGLTV17}. It is sufficient to consider groups in case (c) since the groups in (a)-(b) and (d)-(f) can be ruled out using the character tables in the \Atlas{} \cite{CCNPW85} and \cite{Suz62}.

By inspecting \cite[Table 1]{GGLTV17}, we have our result.
\end{proof}

\section{Main Results}

\begin{theorem}\label{almosthasgreaterthan8}
Let $G$ be a finite almost simple group, that is $ S \triangleleft G \leq Aut(S) $ for some non-abelian simple group $S$. Then either $ |\cv(G)|\geq 9 $ or $ |\cv(G)|=8 $ and $ G\in \{\PSL_{2}(5), \PGL_{2}(5)\} $. In particular, if $ |\cd(G)|\geq 5 $, then $ |\cv(G)|\geq 9 $.
\end{theorem}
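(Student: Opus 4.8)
The plan is to reduce from character values to character degrees and then invoke the available classifications of almost simple groups with few degrees. The key inequality is $|\cv(G)|\geq|\cd(G)|+2$ for every non-abelian group $G$. Indeed, evaluating characters at $1$ gives $\cd(G)\subseteq\cv(G)$, and $0\in\cv(G)$ by Lemma \ref{lem}(c) while $0\notin\cd(G)$, so $|\cv(G)|\geq|\cd(G)|+1$; moreover, for any $g\neq 1$ in $G$, column orthogonality with the identity column gives $\sum_{\chi\in\Irr(G)}\chi(1)\chi(g)=0$, and since the trivial character contributes $1>0$ the summands cannot all be non-negative reals, so $\cv(G)$ contains a value which is negative or non-real and hence lies outside the set of non-negative integers $\cd(G)\cup\{0\}$. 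In particular $|\cd(G)|\geq 7$ already forces $|\cv(G)|\geq 9$, and since a non-solvable group has at least four character degrees, we may assume $4\leq|\cd(G)|\leq 6$.

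Next I would pin down the possibilities. By Theorem \ref{MM05TheoremA}, an almost simple group with exactly four character degrees has socle $\PSL_2(q)$ — namely $\PSL_2(2^f)$ with $f\geq 2$ (case (a), abelian factor trivial), or $\PGL_2(q)$ with $q$ odd and $q\geq 5$ (case (b), with $\Centralizer_G(U)=1$) — or socle $\PSL_2(9)$, i.e. $M_{10}$ (case (c)). By Theorem \ref{cd5,6}, an almost simple group (hence with trivial solvable radical) having five or six character degrees has socle $\PSL_2(q)$, $\PSL_3(4)$, or ${}^{2}\mathrm{B}_2(q)$ with $q=2^{2m+1}$, $m\geq 1$; and Theorem \ref{cd5} makes the socle-$\PSL_2(q)$ groups with five degrees completely explicit. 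Hence it suffices to show $|\cv(G)|\geq 9$ for every almost simple $G$ whose socle is $\PSL_2(q)$, $\PSL_3(4)$, or ${}^{2}\mathrm{B}_2(q)$, apart from $\PSL_2(5)\cong A_5$ and $\PGL_2(5)\cong S_5$.

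For socle $\PSL_3(4)$ and socle ${}^{2}\mathrm{B}_2(q)$ the verification is a finite, resp. uniformly tabulated, check: each such group carries a complex-conjugate pair of irrational character values — from elements of order $7$ for $\PSL_3(4)$, and from elements of order dividing $q\pm\sqrt{2q}+1$ for ${}^{2}\mathrm{B}_2(q)$ — which, together with the rational values $0,1,-1$ (and, where needed, $\pm 2$) and the at least five character degrees, already give at least nine distinct values. For socle $\PSL_2(q)$ I would read off the well-known generic character tables of $\PSL_2(q)$, $\PGL_2(q)$ and of the intermediate and extension groups $\PSL_2(q)\leq G\leq\Aut(\PSL_2(q))$: for $G=\PSL_2(q)$ with $q$ odd the five degrees $1,(q+\varepsilon)/2,q-1,q,q+1$, together with $0$, the value $-1$ (attained by the Steinberg character on regular non-split semisimple elements) and the two conjugate exceptional values $\tfrac12(1\pm\sqrt{\pm q})$, already total nine as soon as they are pairwise distinct, which holds for $q\geq 7$; the cases $G=\PGL_2(q)$ and the intermediate groups are handled the same way, using in addition the distinct real values $\zeta+\zeta^{-1}$ with $\zeta$ a primitive $d$-th root of unity ($d\mid q-1$ or $d\mid q+1$, $d$ not too small) coming from split and non-split semisimple elements. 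Only finitely many small groups remain — $\PSL_2(q)$, $\PGL_2(q)$ (and $M_{10}$, $\mathrm{S}_6$, $\PSL_2(q).2$, $\PSL_3(4){.}2_k$, $\mathrm{Sz}(8){.}3$, and similar) for the bounded values of $q$ excluded above — and these are checked one by one against the \Atlas{}; the only ones with at most eight character values turn out to be $\PSL_2(5)\cong A_5$, with $\cv(G)=\{0,\pm 1,3,4,5,\tfrac12(1\pm\sqrt 5)\}$, and $\PGL_2(5)\cong S_5$, with $\cv(G)=\{0,\pm 1,\pm 2,4,5,6\}$, each of size eight.

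I expect the main obstacle to be the uniform treatment of the socle-$\PSL_2(q)$ family: across all the intermediate almost simple groups between $\PSL_2(q)$ and $\Aut(\PSL_2(q))$ one must control the fusion of conjugacy classes and the extension or induction behaviour of the principal-series, discrete-series and exceptional characters under the diagonal and field automorphisms, and then certify that at least nine distinct values always survive — while isolating precisely the threshold cases $q=4$ and $q=5$, which is exactly where the count drops to eight. The socles $\PSL_3(4)$ and ${}^{2}\mathrm{B}_2(q)$, and the opening reduction $|\cv(G)|\geq|\cd(G)|+2$, are comparatively routine. Finally, the last sentence of the theorem is immediate from the main assertion, since both $\PSL_2(5)$ and $\PGL_2(5)$ have exactly four character degrees.
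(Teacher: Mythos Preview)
Your overall strategy matches the paper's: establish $|\cv(G)|\geq|\cd(G)|+2$ via column orthogonality to dispose of $|\cd(G)|\geq 7$, then invoke Theorems \ref{MM05TheoremA}, \ref{cd5,6}, \ref{cd5} to reduce to socles $\PSL_2(q)$, $\PSL_3(4)$, ${}^{2}\mathrm{B}_2(q)$, and finally inspect character tables. The identification of $\PSL_2(5)$ and $\PGL_2(5)$ as the exceptional cases, and the deduction of the final sentence, are handled identically.

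The genuine difference is in how the $\PSL_2(q)$ family (and to a lesser extent ${}^{2}\mathrm{B}_2(q)$) is treated for the intermediate groups $S<G\leq\Aut(S)$. You propose to read off generic character tables of all such extensions and count irrational values directly; you correctly flag this as the main obstacle, since those tables are not uniformly available for arbitrary field-automorphism extensions. The paper sidesteps this entirely: when $G/\PGL_2(q)$ (or $G/S$) is cyclic of order at least three, it applies Lemma \ref{lem}(a),(b) to pull back two extra values $a,b\notin\{-1,0,1\}$ from the quotient, and combines these with $-1$ (from the extended Steinberg character) and $0$ to reach nine; when the quotient has order two, it uses Clifford theory on the degrees $2(q\pm 1)$ to produce the values $\pm 2$ without ever needing the full character table of the extension. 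This quotient-and-Clifford argument is shorter and avoids the case analysis you anticipate, at the cost of being slightly less self-contained about which specific values witness the bound.
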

\begin{proof}
Since $ G $ is non-solvable, $ |\cd(G)|\geq 4 $ by \cite[Theorem 12.15]{Isa06}. Suppose that $ |\cd(G)|=4 $. By Theorem \ref{MM05TheoremA}, it is sufficient to consider the following groups: $ \PSL_{2}(2^{f}) $ for some $ f\geq 2 $, $ \PGL_{2}(q) $ for some odd $ q\geq 5 $ and $ M_{10} $. The character tables of these groups can be found in the \Atlas{} \cite{CCNPW85}, \cite{Geh02} and \cite{Ste51}. In particular, $ |\cv(G)|\geq 9 $ except for $ \PSL_{2}(5) $ and $ \PGL_{2}(5) $. In the latter cases $ |\cv(G)|=8 $ as required.

We may assume that $ |\cd(G)|=5 $. It is sufficient to consider groups in Theorem \ref{cd5}. For cases (a) and (f), we have that $ |\cv(G)|\geq 9 $, using character tables in \cite{Geh02} and \Atlas{} \cite{CCNPW85}. For cases (b), (e) and (g) we have that $ C=G/\PGL_{2}(q) $ is cyclic of order greater than $ 2 $. Then by Lemma \ref{lem}(b), the character table of $ C $ has a column with pairwise different values. In particular, $ a,b\in \cv(C)\setminus \{-1,0,1\} $ with $ a\not= b $. Note that the Steinberg character $ \psi $ of $ \PSL_{2}(q) $ is extendible to $ G $ and also $ \psi(s)=-1 $ for some $ s\in \PSL_{2}(q) $ by \cite[Theorems 4.7 and 4.9]{Geh02}. Hence $ -1\in \cv(G) $ and $ |\cv(G)|=|\cd(G)\cup \{a, b, -1, 0 \}|\geq 9 $. Suppose that $ C $ is of order $ 2 $. Groups with this property are the ones in cases (c), (d) and (h). Consider the characters $ \chi_{1}, \chi_{2}\in \Irr(G) $ such that $ \chi_{1}(1)=2(q + 1) $ and $ \chi_{2}(1)=2(q - 1) $. Then since $ \PGL_{2}(q) $ has no irreducible characters of degrees $ 2(q\pm 1) $, $ \chi_{i}=\theta_{i1} + \theta_{i2} $ by \cite[Theorem 6.2]{Isa06}, where $ \theta_{11}, \theta_{12}, \theta_{21}, \theta_{22}\in \Irr(S) $, $ \theta_{11}(1)=\theta_{12}(1)=q+1 $ and $ \theta_{21}(1)=\theta_{22}(1)=q-1 $. Using \cite[Theorems 4.7 and 4.9]{Geh02}, we have that $ \theta_{11}(g)=\theta_{12}(g)=1 $ and $ \theta_{21}(g)=\theta_{22}(g)=-1 $ for some $ g\in \PGL_{2}(q) $. Hence $ \chi_{1}(g)=2 $, $ \chi_{1}(g)=-2 $ and $ |\cd(G)\cup \{-2, -1, 0, 2\}|\geq 9 $.

Going forward we may assume that $ |\cd(G)|\geq 6 $. Suppose that $ |\cd(G)|\geq 6 $ and $ S\cong \PSL_{2}(q) $. We may assume that $ G $ is not isomorphic to $ \PSL_{2}(2^{f}) $ with $ f\geq 2 $, $ \PSL_{2}(p^{f}) $ with $ p $ odd or $ \PGL_{2}(p^{f}) $ with $ p $ odd. Let $ H\in \{ \PSL_{2}(2^{f}), f\geq 2\} \cup \{\PSL_{2}(q) $, $ q $ odd $\} \cup \{\PGL_{2}(q), q $ odd$ \} $. If $ G\cong H\mathrm{C}_{r} $, where $ r\mid f $, $ r > 2 $. Then arguing as above, we have that $ a,b\in \cv(\mathrm{C}_{r})\setminus \{-1, 1\} $ and $ |\cv(G)|=|\cd(G)\cup \{-1, 0, a, b \}|\geq 10 $. If $ G\cong H\mathrm{C}_{2} $, $ 2\mid f $. Then either $ 2(q-1) $ or $ 2(q+1) $ is a character degree of $ G $. Arguing as above, we have that either $ -2\in \cv(\mathrm{C}) $ or $ 2\in \cv(\mathrm{C}) $. Since $ -1, 0\in \cv(G) $ and $ |\cd(G)|\geq 6 $ we have that $ |\cv(G)|\geq 9 $.

If $ S\cong \PSL_{3}(4) $, then the result follows using the character table in the \Atlas{} \cite{CCNPW85}. Suppose that $ S\cong ^{2}\!\!\mathrm{B}_{2}(2^{f}) $, where $ 2^{f}=2^{2m + 1} $ for some integer $ m\geq 1 $. If $ G\cong ^{2}\!\!\mathrm{B}_{2}(2^{f}) $, it follows that $ |\cv(G)|\geq 9 $ by the character table in \cite[Theorem 13]{Suz62}. We may assume that $ S < G < \Aut(S) $. Then $ C=G/S $ is cyclic of odd order. Using the argument as in the case of $ \PSL_{2}(q) $ above, we have that $ a,b\in \cv(C)\setminus \{-1, 1\} $. Hence $ |\cv(G)|\geq |\cd(G)\cup \{a,b,0\}|=9 $. 

If $ |\cd(G)|\geq 7 $, then by column orthogonality, there is a character value which is a negative number and so $ |\cv(G)|\geq 9 $. This concludes our proof.
\end{proof}

\begin{proof}[\textbf{Proof of Theorem \ref{thmA}}] We prove our result using induction on $ |G| $. Since $ |\cv(G)|<8 $, every homomorphic image of $ G $ has the same property. Let $ N $ be a minimal normal subgroup of $ G $. If another such minimal normal subgroup, say $ M $, exists then $ G $ is the subdirect product of solvable groups $ G/N $ and $ G/M $, and so is solvable. This implies that $ N $ is unique and non-abelian. Hence $ G $ is almost simple group. The result follows by Theorem \ref{almosthasgreaterthan8}.
\end{proof}
Before we prove Theorem \ref{thmB}, we recall a definition. If $ N\triangleleft G $, then $ \Irr(G{\mid} N)=\{\chi \in \Irr(G)\mid N\not \subseteq \ker \chi \} $ and $ \cd(G{\mid}N)=\{\chi(1)\mid \chi \in \Irr(G{\mid}N)\} $.
\begin{proof}[\textbf{Proof of Theorem \ref{thmB}}]
Since $ G $ is non-solvable, $ |\cd(G)|\geq 4 $. If $ 5\leq |\cd(G)|\leq 6 $, then by Theorem \ref{cd5,6}, $ G'/L $ is isomorphic to  $ \PSL_{2}(p^{f}) $, $ p^{f}\geq 4 $, $ \PSL_{3}(4) $ or $ ^{2}\mathrm{B}_{2}(2^{f}) $, where $ 2^{f}=2^{2m + 1} $ for some integer $ m\geq 1 $, where $ L $ is the solvable radical of $ G $. Specifically, $ G/L $ is an almost simple group. Using Theorem \ref{almosthasgreaterthan8}, $ |\cv(G)|\geq|\cv(G/L)|\geq 9 $. 

If $ |\cd(G)|=4 $, then using Theorem \ref{MM05TheoremA} and Theorem \ref{almosthasgreaterthan8}, it is sufficient to consider $ G\cong \PSL_{2}(5)\times A $ for some abelian subgroup $ A $ or $ G/C\cong \PGL_{2}(5) $, where $ C=\textbf{C}_{G}(U) $, $ U=\PSL_{2}(5) $ or $ \SL_{2}(5) $ with $ C\leq \textbf{Z}(G) $ and $ U $ is a normal subgroup of $ G $. Suppose that $ G\cong \PSL_{2}(5)\times A $ with non-trivial $ A $. Consider $ \theta \in \Irr(\PSL_{2}(5)) $ such that $ \theta(1)=5 $ and $\phi \in \Irr(A) $ such that $ \phi(g)=k < 0 $. Then $ \chi(1\times g)=\theta(1)\times \phi(g)=5k<0 $ and $ |\cv(G)|\geq 9 $. Suppose that $ G $ is a group in the latter case. We want to show that $ C=1 $. Suppose that $ C\neq 1 $. Let $ \textbf{Z}(G)U=N $ and note that $ \cd(G{\mid}N)\geq 3 $ by \cite[Theorem B]{IK98}. Then there is a $ \chi \in \Irr(G) $ such that $ \chi(1)=m\in \cd(G{\mid}N)\setminus \{ 1, 2\} $. For $ 1\neq z\in \textbf{Z}(G) $, we have that $ \chi(z)=-m $. This means that $ |\cv(G)|\geq |\PGL_{2}(5)\cup \{-m \}|=9 $. Therefore $ G=\PGL_{2}(5) $ and that concludes our argument.
\end{proof}

\section*{Acknowledgements}
The author would like to thank the reviewer for their comments which improved the presentation of the article.


\begin{thebibliography}{1}
\bibitem{CCNPW85} J. H. Conway, R. T. Curtis, S. P. Norton, R. A. Parker and R. A. Wilson, \textit{Atlas of Finite Groups}, Clarendon Press, 1985.
\bibitem{GGLTV17} M. Ghaffarzadeh, M. Ghasemi, M. L. Lewis, H. P. Tong-Viet, Nonsolvable groups with no prime dividing four
character degrees, \emph{Algebr. Represent. Theory} \textbf{20} (2017) 547--567.
\bibitem{Geh02} K. E. Gehles, Ordinary characters of finite special linear groups, MSc thesis, University of St Andrews, 2002.
\bibitem{HZ12} L. He and G. Zhu, Nonsolvable normal subgroups and irreducible character degrees, \emph{J. Algebra} \textbf{372} (2012) 68--84.
\bibitem{Isa06} I. M. Isaacs, \textit{Character Theory of Finite Groups}, Amer. Math. Soc., Providence, Rhode Island, 2006.
\bibitem{IK98} I. M. Isaacs, G. Knutson, Irreducible character degrees and normal subgroups, \emph{J. Algebra} \textbf{199} (1998) 302--326.
\bibitem{MM05} G. Malle and A. Moret\'o, Nonsolvable groups with few character degrees, \emph{J. Algebra} \textbf{294} (2005) 117--126.
\bibitem{Sak20} T. Sakurai, Finite groups with very few character values, \emph{Comm. Algebra} \textbf{49} (2021) 658--661.
\bibitem{Ste51} R. Steinberg, The representations of $ \GL(3, q)$, $\GL(4, q)$, $\PGL(3, q)$ and $ \PGL(4, q)$, \emph{Canad. J. Math.} \textbf{3} (1951) 225--235.
\bibitem{Suz62} M. Suzuki, On a class of doubly transitive groups, \emph{Ann. of Math.} \textbf{75} (1962) 105--145.
\end{thebibliography}
\end{document}